\documentclass[11pt]{amsart}
\usepackage[T1]{fontenc}
\usepackage{lmodern}
\usepackage[letterpaper,margin=1in]{geometry}
\usepackage{amsmath,amssymb,amsthm,mathtools}
\usepackage{microtype}
\usepackage{xcolor}
\usepackage[colorlinks=true,linkcolor=blue!40!black,citecolor=blue!40!black,urlcolor=blue!40!black]{hyperref}
\hypersetup{
 pdftitle={Polynomial growth of Bohnenblust--Hille constants on the Hamming cube},
 pdfauthor={Paata Ivanisvili, Haozhu Wang, Shengtong Zhang},
 pdfsubject={A weighted bootstrap proof of polynomial Bohnenblust--Hille bounds}}
\newtheorem{theorem}{Theorem}[section]
\newtheorem{corollary}[theorem]{Corollary}
\theoremstyle{remark}
\newtheorem{remark}[theorem]{Remark}
\numberwithin{equation}{section}
\newcommand{\E}{\mathbb E}
\newcommand{\bal}{\mathcal B}
\newcommand{\BH}{\mathsf B}
\allowdisplaybreaks[2]
\title[Polynomial Bohnenblust--Hille bounds on the cube]{Polynomial growth of Bohnenblust--Hille constants on the Hamming cube}
\author{Paata Ivanisvili}
\address{Department of Mathematics, University of California, Irvine, Irvine, CA 92697, USA}
\email{pivanisv@uci.edu}

\keywords{Bohnenblust--Hille inequality, Hamming cube, Walsh polynomials, hypercontractivity, random coordinate partitions}
\begin{document}
\begin{abstract}
We prove that the Bohnenblust--Hille constants for Walsh polynomials on the
Hamming cube grow at most polynomially in the degree. More precisely, there
is an absolute constant $K$ such that every $f :\{-1,1\}^{n} \to \mathbb{C}$ of
degree at most $m$ satisfies
\[
\left(\sum_{|S|\le m}|\widehat f(S)|^{2m/(m+1)}\right)^{(m+1)/(2m)}
\le Km^{27}\|f\|_\infty.
\]
The estimate is uniform in the dimension. The exponent $27$ is not
optimized.
\end{abstract}
\maketitle

\section{Introduction and main results}

Let $Q_n=\{-1,1\}^n$ be equipped with its uniform probability measure.
Every function $f:Q_n\to\mathbb C$ has the Fourier--Walsh expansion
\[
f(x)=\sum_{S\subseteq[n]}\widehat f(S)x^S,
\qquad x^S=\prod_{j\in S}x_j,
\qquad \widehat f(S)=\E f(x)x^S.
\]
We say that $f$ has degree at most $m$ if $\widehat f(S)=0$ whenever
$|S|>m$. Throughout the paper,
\[
q_r=\frac{2r}{r+1},\qquad
\frac1{q_r}=\frac12+\frac1{2r}\qquad(r\ge1).
\]
The Boolean Bohnenblust--Hille problem asks for the growth, as a function
of $m$, of the least dimension-independent constant $\BH_m$ in
\begin{equation}\label{eq:BH-intro}
\left(\sum_{|S|\le m}|\widehat f(S)|^{q_m}\right)^{1/q_m}
\le\BH_m\|f\|_\infty,
\qquad \deg f\le m.
\end{equation}

The classical Bohnenblust--Hille inequality arose in the solution of
Bohr's absolute convergence problem for Dirichlet series: the largest
possible gap between the abscissas of absolute and uniform convergence
is $1/2$ \cite{BH31}. Its polynomial form relates a coefficient
$\ell_{q_m}$ norm to a supremum norm on a polydisc. The dependence of the
inequality on the degree is a central quantitative question. In the
complex polynomial setting, Bayart, Pellegrino, and Seoane-Sep\'ulveda
\cite{BPS14} established subexponential estimates of the form
$\exp(O(\sqrt{m\log m}))$. They used these estimates to determine the
precise asymptotic behavior of the Bohr radius $K_n$ of the
$n$-dimensional polydisc, proving $K_n\sim\sqrt{(\log n)/n}$ as
$n\to\infty$ \cite{BPS14}.

For the Boolean cube, Defant, Masty\l o, and P\'erez \cite{DMP19} proved
the corresponding estimate
\begin{equation}\label{eq:DMP-bound}
\BH_m\le C\exp\!\bigl(c\sqrt{m\log m}\bigr)
\end{equation}
with absolute constants $C,c$.

Eskenazis and Ivanisvili \cite{EI22} used these constants $\BH_m$ to learn bounded
low-degree functions from a number of uniform random examples logarithmic
in the dimension.
Related Bohnenblust--Hille inequalities also yield learning bounds for
low-degree quantum observables \cite{VZ24}.

Recently, in a breakthrough paper, Pellegrino and Teixeira \cite{PT26}
obtained polynomial growth of the Bohnenblust--Hille constants for analytic
polynomials on the polydisc. Their argument uses a \emph{weighted square
function}. We adapt this approach to hamming cube $\{-1,1\}^{n}$. While approahces are similar there are certain distinctions between these proofs: on the polydisc, adjoining one variable makes a polynomial homogeneous
without changing its coefficient or supremum norm. On the cube, this
construction does not produce a homogeneous Walsh polynomial, and even a
single homogeneous level $f^{=r}$ can have larger supremum norm than $f$.
We therefore work with all degrees together in the weighted square
function $S(f)$. The random coloring of \cite{PT26} becomes simpler here:
each variable occurs at most once in a Walsh monomial, so a binomial
estimate captures, on average, a fixed proportion of every high-degree
coefficient sum. Bonami--Beckner hypercontractivity bounds the resulting
sums by square functions of coordinate restrictions. The weights then
give a contraction at fixed dimension, while Markov's inequality controls
the finitely many low degrees.

Our main result is the following.

\begin{theorem}\label{thm:main}
There is an absolute constant $K<\infty$ such that, for every pair of
positive integers $m,n$ with $m\le n$ and every $f:Q_n\to\mathbb C$ of degree
at most $m$,
\begin{equation}\label{eq:main}
\left(\sum_{|S|\le m}|\widehat f(S)|^{2m/(m+1)}\right)^{(m+1)/(2m)}
\le Km^{27}\|f\|_\infty.
\end{equation}
Equivalently, $\BH_m\le Km^{27}$ for every $m\ge1$.
\end{theorem}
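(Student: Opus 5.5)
We will prove Theorem~\ref{thm:main} by a bootstrap on a weighted square function that treats all degrees together, following the strategy outlined in the introduction. For $f$ of degree at most $m$ on $Q_n$, define a weighted coefficient functional
\[
S(f)=\Bigl(\sum_{r\le m} w_r\Bigl(\sum_{|S|=r}|\widehat f(S)|^{q_r}\Bigr)^{q_m/q_r}\Bigr)^{1/q_m},
\]
or a similar mixed-norm expression. The weights $w_r$ are polynomial in $r$ and are chosen to decay toward the top degree. Since $q_r\le q_m$, we have $\|\cdot\|_{\ell_{q_m}}\le \|\cdot\|_{\ell_{q_r}}$ on each level. Hence the left side of \eqref{eq:main} is bounded by $\mathrm{poly}(m)\,S(f)$, and it suffices to prove $S(f)\le \mathrm{poly}(m)\|f\|_\infty$. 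Fix $n$ and let $A_n=\sup S(f)/\|f\|_\infty$, the supremum over $f$ on $Q_n$ of degree $\le m$. This is finite for fixed $n$ because all norms are equivalent there. The aim is an inequality $A_n\le \theta A_n+C m^{c}$ with $\theta<1$ independent of $n$, which gives $A_n\le Cm^c/(1-\theta)$ uniformly in $n$.

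The core step is a random coloring. Assign each coordinate $j\in[n]$ independently to one of two classes $I$ and $J=[n]\setminus I$, each with probability $1/2$. Fix a set $S$ with $|S|=r$. The split $|S\cap I|$ is binomial, so with probability bounded below by an absolute constant it has size between $r/3$ and $2r/3$. Taking expectations, every high-degree coefficient sum is therefore captured by balanced splits up to a fixed proportion. For a balanced split with $|S\cap I|=k$, write $\widehat f(S)$ as the coefficient of $x^{S\cap J}$ in $g_{S\cap I}(x_J)$, where $g_T(x_J)=\E_{x_I} f(x)x_I^T$. We then use a Blei/Hölder-type interpolation of mixed norms, $\ell_{q_r}\le(\ell_{q_k}(\ell_2))^{\alpha}(\ell_{q_{r-k}}(\ell_2))^{1-\alpha}$ in the standard BH style. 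Bonami--Beckner hypercontractivity in the $J$ variables controls $\bigl(\sum_T|\widehat{g_T}|^2\bigr)^{1/2}$ for the degree-$(r-k)$ part with only a $C^{r-k}$ loss. To avoid this exponential loss, we apply it instead to each restriction $x_J\mapsto f(x_I,x_J)$ as a function of $x_I$, which has degree $\le k$ and sup norm $\le\|f\|_\infty$, and then feed the restricted functions back into $A_n$ at degree $k\approx r/2$. Combining the two halves yields
\[
\text{(level-$r$ sum)}\le \gamma(r)\,\sup_{k\in[r/3,2r/3]}(\text{weighted } S \text{ of restrictions}),
\]
and the ratio $w_r/w_k$ together with the coloring constant produces the contraction factor $\theta<1$ for all $r\ge r_0$.

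For the low degrees $r<r_0$, with $r_0$ an absolute constant or $O(\log m)$, we do not bootstrap. We use Markov's (Bernstein's) inequality on the cube to extract each homogeneous level, which gives $\|f^{=r}\|_\infty\le \mathrm{poly}(m)\|f\|_\infty$, and then apply the classical fixed-degree bound, or simply the bound of \eqref{eq:DMP-bound}, which is $O(1)$ for bounded $r$. This produces the additive term $Cm^c$.

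The main obstacle is the exponents. Hypercontractivity costs $C^{d}$ in the degree $d$ of the variable set to which it is applied, and halving the degree only helps if these factors are absorbed by the weights. We first try restricting the hypercontractive step to the $\ell_2$ inner norm, where the cost is Khintchine-type, and letting the degree halving happen in the bootstrap. If the constants do not close, we will instead choose weights $w_r=r^{-\beta}$ with $\beta$ large, which is where an exponent such as $27$ would come from. Checking that the Hölder interpolation exponents match $q_r$ exactly at each split is routine but delicate.
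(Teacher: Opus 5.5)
Your outline follows the paper's architecture: bootstrap at fixed $n$, random two-coloring, balanced splits, the two-block H\"older inequality, restrictions fed back into the supremum, and Markov plus fixed-degree constants for low levels. But the step you call the main obstacle is left open, and your fallback for it cannot work. If hypercontractivity really cost $C^{r-k}$ with a fixed $C>1$, no polynomial weights would close the bootstrap. For $w_r=r^{-\beta}$ and a split $d+e=r$ with $\vartheta=d/r$, the gain $w_r/(w_d^{\vartheta}w_e^{1-\vartheta})$ equals $[\vartheta^\vartheta(1-\vartheta)^{1-\vartheta}]^{\beta}\ge 2^{-\beta}$. This is a constant independent of $r$, so the weights absorb only $O(1)$ losses.

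The missing observation is that the loss \emph{is} $O(1)$ when Bonami--Beckner is used at the exact exponent. In $X_{d,e}=\|c_{A,B}\|_{\ell^{q_d}_A(\ell^2_B)}$, with $c_{A,B}=\widehat f(A\cup B)$, one passes from $L^2$ to $L^{q_d}$ in the $y$-variables. The admissible noise parameter is $\rho_d=\sqrt{q_d-1}=\sqrt{(d-1)/(d+1)}$, which tends to $1$. The price on $y$-degree $e$ is $\rho_d^{-e}$. Raised to the H\"older exponent $\vartheta=d/r$, it becomes $((d+1)/(d-1))^{de/(2r)}\le 3^{e/r}$, because $d\log\frac{d+1}{d-1}\le 2\log 3$ for $d\ge 2$. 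The symmetric factor for $Y_{d,e}$ is at most $3^{d/r}$, so the total is at most $3$ for all $d,e\ge 2$. Only because of this can a fixed weight $r^{-10}$, whose gain on balanced splits is at most $(2/3)^{10}$, beat the remaining absolute constants.

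The second missing ingredient is how the levels are summed. Suppose you bound each level $r$ separately by $\gamma(r)\sup_k(\cdot)$ and then sum over $r_0\le r\le m$. This uses the bootstrap supremum once per level, and the weight gain per level is only a constant. The total is then of order $m$ times that constant, and closing requires $\beta\gtrsim\log m$, i.e.\ a quasi-polynomial final bound. Moreover, applying hypercontractivity to the degree-$(r-k)$ part of $g_T$ yields coefficients of restrictions of a $y$-homogeneous component of $f$. On the cube, that component's sup norm is not controlled by $\|f\|_\infty$.

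The paper handles both points at once. With $\ell_2$ across levels, Minkowski's inequality $\ell^2_e(\ell^{q_d}_A)\le\ell^{q_d}_A(\ell^2_e)$ gathers all $y$-degrees for fixed $d$ into the full inhomogeneous row $R_A(y)=\sum_{B}c_{A,B}y^B$. A single application of $\|T_{\rho_d}R_A\|_2\le\|R_A\|_{q_d}$ then gives $\sum_e\rho_d^{2e}X_{d,e}^2\le\E_y\mathcal{S}_d(f(\cdot,y))^2$. Summing against the weights $d^{-10}$ bounds the whole contribution by the square of the bootstrap supremum, with no dependence on the number of levels. Note that hypercontractivity acts in the frozen variables $y$. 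The restrictions $f(\cdot,y)$ enter only afterwards, through $\sum_{|A|=d}|R_A(y)|^{q_d}=\mathcal{S}_d(f(\cdot,y))^{q_d}$.

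Finally, $r_0$ must be an absolute constant. For $r_0\sim\log m$ the Markov bound $m^{2r}/r!$ is already superpolynomial.
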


The proof gives the following more structured estimate. Write
$f^{=r}=\sum_{|S|=r}\widehat f(S)x^S$ and
\[
\mathcal{S}_{r}(f)=\left(\sum_{|S|=r}|\widehat f(S)|^{q_r}\right)^{1/q_r}.
\]

\begin{theorem}\label{thm:weighted}
There is an absolute constant $K<\infty$ such that every polynomial as in
Theorem~\ref{thm:main} satisfies
\begin{equation}\label{eq:weighted-intro}
\left(\sum_{r=1}^m\frac{\mathcal{S}_{r}(f)^2}{r^{10}}\right)^{1/2}
\le Km^{22}\|f\|_\infty.
\end{equation}
\end{theorem}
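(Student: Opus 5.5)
The plan is a weighted bootstrap on the quantity
\[
\Phi(f)=\Bigl(\sum_{r=1}^m r^{-10}\mathcal S_r(f)^2\Bigr)^{1/2},
\qquad
A_{n,m}=\sup\{\Phi(f):\ f:Q_n\to\mathbb C,\ \deg f\le m,\ \|f\|_\infty\le1\}.
\]
At fixed $n$, $A_{n,m}$ is finite, since there are finitely many coefficients and each satisfies $|\widehat f(S)|\le\|f\|_\infty$. We will prove an inequality of the form
\[
A_{n,m}\le Cm^{22}+\theta A_{n,m},\qquad \theta<1\ \text{absolute}.
\]
This gives $A_{n,m}\le Cm^{22}/(1-\theta)$ uniformly in $n$. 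The split is at a threshold degree $r_0$, an absolute constant. For $r\le r_0$ we will simply bound $\mathcal S_r(f)$ by a polynomial in $m$. Markov's (Bernstein's) inequality on the cube gives $\|f^{=r}\|_\infty\lesssim m^{O(r)}\|f\|_\infty$, for instance via $\|f^{=r}\|_2\le\|f\|_\infty$ together with the extraction of the $r$-th level through a polynomial in $t$ of $f(tx)$. The classical dimension-free estimate $\mathcal S_r(g)\le C^r\|g\|_\infty$ for $r$-homogeneous $g$ (Blei/Bohnenblust–Hille for fixed degree) then shows that the low degrees contribute at most $Cm^{O(r_0)}$. This is where the exponent $22$ is spent.

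For the high degrees $r>r_0$ we use random coordinate partitions. We colour each coordinate independently with one of $k$ colours; for $k=2$ we split $[n]=I\sqcup J$. For each $S$ with $|S|=r$, the binomial estimate says that with probability at least $c>0$ we have $|S\cap I|=\lfloor r/2\rfloor$ up to $O(\sqrt r)$. Averaging over the colouring, $\mathcal S_r(f)^{q_r}$ is therefore at most $c^{-1}\E\sum_{S:\,|S\cap I|\approx r/2}|\widehat f(S)|^{q_r}$. Writing $S=A\sqcup B$ with $A\subset I$ and $B\subset J$, we apply the Blei/H\"older mixed-norm interpolation. This bounds $\sum|\widehat f(A\sqcup B)|^{q_r}$ by products of sums of the form $\sum_A\bigl(\sum_B|\widehat f(A\sqcup B)|^2\bigr)^{q_{a}/2}$ with $a\approx r/2$, together with the symmetric expression. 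The inner sum $\sum_B|\widehat f(A\sqcup B)|^2$ is the $L^2$ norm of the coefficient function $x_J\mapsto\E_{x_I}f(x)x_I^A$.

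The next step applies Bonami–Beckner hypercontractivity on the $J$-variables. This converts the mixed $\ell^{q_a}(\ell^2)$ norm into $\E_{y\in Q_J}\mathcal S_{a}(f(\cdot,y))$ up to a factor $C^{r}$ — too lossy — so instead we keep the degree-$b$ component fixed and lose only a factor $\sqrt{C}^{\,\cdot}$, paid per half-degree. Here $f(\cdot,y)$ is a restriction with $\|f(\cdot,y)\|_\infty\le\|f\|_\infty$ and degree at most $m$. The outcome should be a recursive inequality
\[
\mathcal S_r(f)\le \beta_r\sup_y\Bigl(\mathcal S_{\lceil r/2\rceil}(f_y)\,\mathcal S_{\lfloor r/2\rfloor}(f_y)\Bigr)^{1/2},
\]
with $\beta_r$ growing at most polynomially in $r$. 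Inserting this into $\Phi$, the weight $r^{-10}$ versus $(r/2)^{-10}$ gains a factor $2^{-10}$, which beats $\beta_r^2$ and the $O(1)$ overlap from the summation over $r$ once $r>r_0$. This yields the contraction $\theta<1$, since each $f_y$ is again admissible for $A_{n,m}$.

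The main obstacle is the hypercontractive step: controlling the mixed norm by the sup-norm-normalized square function of restrictions with only a polynomial loss $\beta_r$, rather than the $C^{r}$ loss of naive hypercontractivity. The first thing to try is hypercontractivity with exponent $q_a$ close to $2$. The constant for $L^{q}\to L^2$ on degree $a$ is $(q-1)^{-a/2}$, and with $q_a=2-2/(a+1)$ one has $(q_a-1)^{-a/2}=((a+1)/(a-1))^{a/2}\approx e$, so the loss is bounded. If that does not suffice, we would use several colours to make the degrees of the pieces drop geometrically and absorb the remaining losses into the weight exponent $10$.
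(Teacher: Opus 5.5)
\textbf{There is a genuine gap.} Your outline shares the paper's architecture: a bootstrap at fixed $n$, Markov plus fixed-degree Bohnenblust--Hille constants for the first levels, a random two-colouring, the H\"older/Minkowski bound $b_{d,e}\le X_{d,e}^{\vartheta}Y_{d,e}^{1-\vartheta}$, hypercontractivity with $\rho^2=q_d-1$, and a contraction from the weights. But the step you flag as the main obstacle is left open, and the version you sketch would not close.

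\emph{Fixing the $y$-degree loses admissibility.} Suppose you keep the $y$-degree $e$ fixed and apply hypercontractivity to $R_A^{=e}(y)=\sum_{|B|=e}c_{A,B}y^B$. You get $\E_y\sum_{|A|=d}|R_A^{=e}(y)|^{q_d}$. These numbers are level-$d$ coefficients of a restriction of the part of $f$ of exact degree $e$ in the $J$-variables, not of $f(\cdot,y)$. That function is not bounded by $\|f\|_\infty$; the available bound is of Markov type, $m^{2e}/e!$. So it is not admissible for $A_{n,m}$.

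\emph{The displayed recursion cannot give $\theta<1$.} There are three separate problems.
\begin{enumerate}
\item A polynomially growing $\beta_r$ is not beaten by the fixed gain $2^{-10}$. Forcing the exact split $\lfloor r/2\rfloor$, whose capture probability is $\asymp r^{-1/2}$, really does cost a factor $\asymp r^{1/4}$.
\item With $\sup_y$ taken level by level, different $r$ use different restrictions. Then $\sum_r r^{-10}\sup_y(\cdots)$ is only bounded by a multiple of $m\,A_{n,m}^2$.
\item With a window of constant capture probability, a fixed $x$-degree $d$ is paired with many $e$ (about $\sqrt d$ for your window). So the overlap is not $O(1)$ unless the sum over $e$ is absorbed for free.
\end{enumerate}

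\emph{The missing idea is to keep all $y$-degrees together.} This is what the paper does.
\begin{enumerate}
\item Insert damping factors beforehand: $\widehat X_{d,e}=\rho_d^{\,e}X_{d,e}$ and $\widehat Y_{d,e}=\rho_e^{\,d}Y_{d,e}$, with $\rho_d^2=q_d-1=\tfrac{d-1}{d+1}$. On balanced pairs ($r/6\le d,e\le5r/6$) removing them costs at most a factor $3$. This is where your computation $((a+1)/(a-1))^{a/2}\approx e$ belongs.
\item Linearize each block by weighted AM--GM:
$b_{d,e}^2/r^{10}\le 9\bigl[\vartheta^\vartheta(1-\vartheta)^{1-\vartheta}\bigr]^{10}\bigl(U_{d,e}^2+V_{d,e}^2\bigr)$,
with $U_{d,e}=\widehat X_{d,e}/d^5$ and $V_{d,e}=\widehat Y_{d,e}/e^5$.
\item For fixed $d$, you can now sum over all $e$ before using hypercontractivity. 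Minkowski, Parseval and Bonami--Beckner applied to the full row $R_A(y)=\sum_{B\subset J}c_{A,B}y^B$ give
$\bigl(\sum_e\widehat X_{d,e}^2\bigr)^{1/2}\le\bigl(\sum_{|A|=d}\|T_{\rho_d}R_A\|_2^{q_d}\bigr)^{1/q_d}\le\bigl(\E_y\sum_{|A|=d}|R_A(y)|^{q_d}\bigr)^{1/q_d}$.
The values $R_A(y)$ are exactly the level-$d$ coefficients of the genuine restriction $f(\cdot,y)$, which is admissible.
\item Summing in $d$ with weight $d^{-10}$ gives $\sum_{d,e}U_{d,e}^2\le\E_y\Phi(f(\cdot,y))^2\le A_{n,m}^2$, with one restriction serving all levels at once. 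The same holds for $V_{d,e}$ with $x$ and $y$ interchanged.
\item The factor $\bigl[\vartheta^\vartheta(1-\vartheta)^{1-\vartheta}\bigr]^{10}\le(2/3)^{10}$ then absorbs the remaining absolute losses, giving $\theta<1$.
\end{enumerate}
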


In Section~\ref{sec:proof} we prove the two results together, starting
with the coefficient norm in \eqref{eq:main}.

As an application, Theorem~\ref{thm:weighted} establishes a special case of
the Aaronson--Ambainis conjecture \cite{AA14}, for polynomials whose Fourier
coefficients have constant absolute value on each degree level. For a
real-valued function $f$ on $Q_n$, write
\[
\operatorname{Var}(f)=\E|f-\E f|^2
=\sum_{S\ne\varnothing}|\widehat f(S)|^2,
\qquad
\operatorname{Inf}_i(f)=\sum_{S\ni i}|\widehat f(S)|^2.
\]

\begin{corollary}\label{cor:AA}
There is an absolute constant $c>0$ such that, for every $1\le m\le n$
and every $f:Q_n\to[-1,1]$ of degree at most $m$ satisfying
$|\widehat f(S)|=a_{|S|}$ for some $a_0,\ldots,a_m\ge0$ and all $|S|\le m$,
\[
\operatorname{Inf}_i(f)\ge
\frac{c}{m^{54}}\operatorname{Var}(f)^2
\qquad\text{for every }i\in[n].
\]
\end{corollary}

The assumption concerns only the magnitudes of the coefficients; their
signs may vary within each level. The proof of Corollary~\ref{cor:AA}
is given in Section~\ref{sec:AA}.

\section{Proof of the main results}\label{sec:proof}

\begin{proof}[Proof of Theorems~\ref{thm:main} and~\ref{thm:weighted}]
Fix positive integers $1\le m\le n$. If $f=0$, there is nothing to prove,
so by homogeneity we may assume $\|f\|_\infty=1$.
All empty sums and all layers above the degree of the polynomial are
understood to be zero. In particular, after freezing coordinates, the
degree bound $m$ may exceed the number of remaining coordinates; the same
convention applies to the resulting empty layers.

We first split the coefficient sum according to degree:

\[
\sum_{|S|\le m}|\widehat f(S)|^{q_m}
=|\widehat f(\varnothing)|^{q_m}
+\sum_{r=1}^m\sum_{|S|=r}|\widehat f(S)|^{q_m}.
\]
Write
\[
q_r=\frac{2r}{r+1},\qquad
\mathcal{S}_{r}(f)=\left(\sum_{|S|=r}|\widehat f(S)|^{q_r}\right)^{1/q_r}.
\]
Since $q_r\le q_m$, the coefficient norm on the $r$-th level is at most
$\mathcal{S}_{r}(f)$. Consequently,
\begin{align}
\left(\sum_{|S|\le m}|\widehat f(S)|^{q_m}\right)^{1/q_m}
&\le 1+\left(\sum_{r=1}^m \mathcal{S}_{r}(f)^{q_m}\right)^{1/q_m}\notag\\
&\le 1+m^{1/q_m-1/2}\left(\sum_{r=1}^m \mathcal{S}_{r}(f)^2\right)^{1/2}\notag\\
&\le 1+m^{5+1/(2m)}
\underbrace{\left(\sum_{r=1}^m\frac{\mathcal{S}_{r}(f)^2}{r^{10}}\right)^{1/2}}_{S(f)}.
\label{eq:start}
\end{align}
The first inequality uses $|\widehat f(\varnothing)|\le1$ and the
subadditivity of $t\mapsto t^{1/q_m}$, since $1/q_m\le1$.
The second is the comparison of normalized $\ell_{q_m}$ and $\ell_2$ norms:
\[
\left(\frac1m\sum_{r=1}^m \mathcal{S}_{r}(f)^{q_m}\right)^{1/q_m}
\le\left(\frac1m\sum_{r=1}^m \mathcal{S}_{r}(f)^2\right)^{1/2}.
\]

Thus we have reduced the problem to estimating
\[
S(f)=\left(\sum_{r=1}^m\frac{\mathcal{S}_{r}(f)^2}{r^{10}}\right)^{1/2}.
\]
It remains to prove $S(f)\le K m^{22}$. The reason for introducing the weights
$r^{-10}$ will become apparent when we split coordinates.

We may assume $m\ge12$: for $m<12$, the low-degree estimate below already
controls all of $S(f)$. Split the weighted sum:
\begin{equation}
S(f)\le
\underbrace{\left(\sum_{r=1}^{11}\frac{\mathcal{S}_{r}(f)^2}{r^{10}}\right)^{1/2}}_{H(f):\ \text{low degrees}}
+\underbrace{\left(\sum_{r=12}^{m}\frac{\mathcal{S}_{r}(f)^2}{r^{10}}\right)^{1/2}}_{T(f):\ \text{high degrees}}.
\label{eq:headtail}
\end{equation}

Estimating $H(f)$ is strightforward and follows from a standard fact 
$|f^{=r}(x)|\le\frac{m^{2r}}{r!} \|f\|_{\infty}$
We present its proof here for
completeness. For a fixed vertex $x$, consider
\[
p_x(t)=f(tx)=\sum_{r=0}^m t^r f^{=r}(x).
\]
Because $f$ is multilinear, its absolute value on $[-1,1]^n$ is bounded by its
maximum on the vertices. Indeed, with all other coordinates fixed,
$t\mapsto|A+Bt|$ is convex, so its maximum on $[-1,1]$ occurs at an endpoint.
Apply this successively in each coordinate. Hence
\[
|p_x(t)|\le1\qquad(-1\le t\le1).
\]
Extracting the $r$-th coefficient and applying the first-derivative Markov
inequality repeatedly gives (see \cite[Section~5.1]{BE95})
\[
|f^{=r}(x)|=\frac{|p_x^{(r)}(0)|}{r!}\le\frac{m^{2r}}{r!}.
\]

For each of the fixed degrees $1\le r\le11$, use a finite homogeneous
Bohnenblust--Hille constant $C_r$, whose existence follows from \cite{DMP19}:
\[
\mathcal{S}_{r}(f)\le C_r\|f^{=r}\|_\infty\le\frac{C_r m^{2r}}{r!}.
\]
Therefore, with a constant independent of $m,n$,
\begin{equation}
H(f)=\left(\sum_{r=1}^{11}\frac{\mathcal{S}_{r}(f)^2}{r^{10}}\right)^{1/2}
\le K m^{22}, \quad \text{where} \quad K=\left(\sum_{r=1}^{11}\frac{C_r^2}{r^{10}(r!)^2}\right)^{1/2}<\infty.
\label{eq:head}
\end{equation}

We now need to estimate the high-degree part $T(f)$. 

Keep $m,n$ fixed and let
\[
M=\sup_{\substack{\deg g\le m\\\|g\|_\infty\le1}}S(g)<\infty,
\]
where $g$ ranges over polynomials on the same $n$-dimensional cube; at this
stage, $M$ may depend on $m,n$.

We will show that, for every $f$ with $\|f\|_\infty=1$,
\[
T(f)=\left(\sum_{r=12}^m\frac{\mathcal{S}_{r}(f)^2}{r^{10}}\right)^{1/2}
\le\tfrac34M.
\]
Together with \eqref{eq:headtail} and \eqref{eq:head}, this will give
\[
S(f)\le H(f)+T(f)\le Km^{22}+\tfrac34M.
\]
For $0<\|g\|_\infty\le1$, apply this to $f=g/\|g\|_\infty$ and use
$S(g)=\|g\|_\infty S(f)$.
Thus the same bound holds for every $g$, such that $\|g\|_{\infty}\leq 1$ and $\mathrm{deg}(g) \leq m$. 
Taking the supremum over all such $g$'s gives
\[
M\le Km^{22}+\tfrac34M,
\]
and hence $M\le4Km^{22}$.

The class of polynomials of degree at most $m$ is stable under freezing
coordinates. Every resulting function still has degree at most $m$ and
supremum norm at most $1$. A function of fewer coordinates can be regarded
as a function on the original cube, independent of the unused coordinates.
Its weighted coefficient norm $S$, the quantity on the left-hand side of
\eqref{eq:weighted-intro}, is therefore bounded by the same $M$.

Return to the sum
\[
T(f)^2=\sum_{r=12}^m\frac{\mathcal{S}_{r}(f)^2}{r^{10}}.
\]
Split the coordinate indices $[n]=\{1,\ldots,n\}$ randomly into two disjoint
sets $I$ and $J$, with $I\cup J=[n]$, assigning each index to either set
with probability $1/2$, independently. Denote the resulting random partition
by $\omega=(I,J)$; $\E_\omega$ denotes averaging over these choices.
For a fixed partition, write $x=(x_i)_{i\in I}$ and $y=(y_j)_{j\in J}$;
thus $x$ now denotes only the coordinates in $I$, rather than the full
$n$-dimensional vector. With this notation,
\[
f(x,y)=\sum_{A\subset I,\ B\subset J}c_{A,B}x^Ay^B,
\qquad c_{A,B}=\widehat f(A\cup B).
\]
Each original coefficient $\widehat f(S)$ occurs exactly once, with
$A=S\cap I$ and $B=S\cap J$.

For a fixed set $S$ of size $r$, the number of its coordinates landing in $I$ is
\[
N_S=|S\cap I|=\sum_{j\in S}\mathbf1_{\{j\in I\}}.
\]
The indicators are independent Bernoulli variables with parameter $1/2$,
so $N_S$ has distribution $\operatorname{Bin}(r,1/2)$ and mean $r/2$.
Hoeffding's inequality \cite{Hoeffding63} therefore gives
\[
\begin{aligned}
p_r&:=\mathbb P_\omega\{r/6\le N_S\le5r/6\}\\
&=\mathbb P_\omega\{|N_S-r/2|\le r/3\}\\
&=1-\mathbb P_\omega\{|N_S-r/2|>r/3\}\\
&\ge1-2\exp\!\left(-\frac{2(r/3)^2}{r}\right)
=1-2e^{-2r/9}\\
&\ge p:=1-2e^{-8/3}>0.86\qquad(r\ge12).
\end{aligned}
\]
The probability $p_r$ depends only on $r$, and $p$ is a common lower bound
for all $r\ge12$.

For a fixed partition $\omega=(I,J)$ and $r\ge12$, define
$C_r(\omega)\ge0$ by
\[
C_r(\omega)^{q_r}
=\sum_{|S|=r}|\widehat f(S)|^{q_r}
\mathbf1_{\{r/6\le|S\cap I|\le5r/6\}}.
\]
Averaging the last identity gives
\[
\begin{aligned}
\E_\omega C_r(\omega)^{q_r}
&=\sum_{|S|=r}|\widehat f(S)|^{q_r}
\mathbb P_\omega\{r/6\le|S\cap I|\le5r/6\}\\
&=p_r \mathcal{S}_{r}(f)^{q_r}\ge p\,\mathcal{S}_{r}(f)^{q_r}.
\end{aligned}
\]
Since $q_r\le2$, we obtain
\[
\mathcal{S}_{r}(f)\le p^{-1/q_r}
\bigl(\E_\omega C_r(\omega)^{q_r}\bigr)^{1/q_r}
\le p^{-1/q_r}\bigl(\E_\omega C_r(\omega)^2\bigr)^{1/2}.
\]
For $r\ge12$, we also have $q_r\ge q_{12}=24/13$, so
$p^{-1/q_r}\le p^{-13/24}$ because $0<p<1$. Consequently,
\begin{align}
T(f)&=\left(\sum_{r=12}^m\frac{\mathcal{S}_{r}(f)^2}{r^{10}}\right)^{1/2}\notag\\
&\le\left(\sum_{r=12}^m\frac{p^{-2/q_r}}{r^{10}}
\E_\omega C_r(\omega)^2\right)^{1/2}\notag\\
&\le p^{-13/24}
\left(\E_\omega\sum_{r=12}^m\frac{C_r(\omega)^2}{r^{10}}\right)^{1/2}.
\label{eq:capture}
\end{align}

For a fixed partition $\omega=(I,J)$ and nonnegative integers $d,e$ with
$d+e\ge1$, put
\[
b_{d,e}=\left(
\sum_{\substack{A\subset I,\ |A|=d\\B\subset J,\ |B|=e}}
|c_{A,B}|^{q_{d+e}}\right)^{1/q_{d+e}}.
\]
Here $d$ and $e$ are the degrees in the two groups of variables; the
dependence of $b_{d,e}$ on $\omega$ is implicit.
On level $r\ge12$, the \emph{balanced pairs} are those with $d+e=r$
and $r/6\le d,e\le5r/6$. Grouping the terms in $C_r(\omega)$ by these
two degrees gives
\[
C_r(\omega)=
\left(\sum_{\substack{d+e=r\\r/6\le d,e\le5r/6}}
b_{d,e}^{q_r}\right)^{1/q_r}.
\]

There are at most $r+1$ blocks $b_{d,e}$ in $C_r$, since a pair of
nonnegative integers with $d+e=r$ is determined by $d\in\{0,\ldots,r\}$.
Since $1/q_r-1/2=1/(2r)$, comparison of the $\ell_{q_r}$ and $\ell_2$
norms of this sequence yields
\[
C_r\le(r+1)^{1/(2r)}
\left(\sum_{\substack{d+e=r\\r/6\le d,e\le5r/6}}b_{d,e}^2\right)^{1/2}.
\]
For $r\ge12$, the prefactor satisfies
$(r+1)^{1/(2r)}\le\kappa:=13^{1/24}$.
Write
\[
\bal=\{(d,e):12\le d+e\le m,\ (d+e)/6\le d,e\le5(d+e)/6\}.
\]
Substituting this bound for $C_r$ into \eqref{eq:capture} and reindexing
the sum by $r=d+e$ gives
\begin{align}
T(f)&\le p^{-13/24}\kappa
\left(\E_\omega\sum_{r=12}^m\frac1{r^{10}}
\sum_{\substack{d+e=r\\r/6\le d,e\le5r/6}}b_{d,e}^2\right)^{1/2}\notag\\
&=p^{-13/24}\kappa
\left(\E_\omega\sum_{(d,e)\in\bal}
\frac{b_{d,e}^2}{(d+e)^{10}}\right)^{1/2}.
\label{eq:blocks}
\end{align}

We have reached a coefficient sum over two blocks. Now comes the mixed-norm
step familiar from the standard Bohnenblust--Hille argument.

For integers $d,e\ge2$, put $r=d+e$ and write $\vartheta=d/r$. Define
\[
\begin{aligned}
X_{d,e}&=\left[
\sum_{|A|=d}\left(\sum_{|B|=e}|c_{A,B}|^2\right)^{q_d/2}
\right]^{1/q_d},\\
Y_{d,e}&=\left[
\sum_{|B|=e}\left(\sum_{|A|=d}|c_{A,B}|^2\right)^{q_e/2}
\right]^{1/q_e}.
\end{aligned}
\]
H\"older and Minkowski give
\[
b_{d,e}\le X_{d,e}^{d/(d+e)}Y_{d,e}^{e/(d+e)}.
\]
This is exactly Pellegrino and Teixeira's two-block coefficient inequality
\cite[Lemma~3.4]{PT26}. We present its short proof here for completeness:
\begin{align}
b_{d,e}&=\left(\sum_{|A|=d}\sum_{|B|=e}|c_{A,B}|^{q_r}\right)^{1/q_r}\notag\\
&\overset{\text{H\"older}}{\le}
\left[\sum_{|A|=d}\left(\sum_{|B|=e}|c_{A,B}|^2\right)^{q_d/2}\right]^{\vartheta/q_d}
\left[\sum_{|A|=d}\left(\sum_{|B|=e}|c_{A,B}|^{q_e}\right)^{2/q_e}\right]^{(1-\vartheta)/2}\notag\\
&\overset{\text{Minkowski}}{\le}X_{d,e}^{\vartheta}Y_{d,e}^{1-\vartheta}.
\label{eq:mix}
\end{align}
The first inequality applies H\"older in $B$ and then in $A$; the exponents
fit because
\[
\frac1{q_r}=\frac{\vartheta}{q_d}+\frac{1-\vartheta}{2}
=\frac{\vartheta}{2}+\frac{1-\vartheta}{q_e}.
\]
The second inequality uses Minkowski to bound
$\|c\|_{\ell_A^2(\ell_B^{q_e})}$ by
$\|c\|_{\ell_B^{q_e}(\ell_A^2)}=Y_{d,e}$, since $q_e\le2$.

Next, insert the \emph{damping factors} that hypercontractivity will need
later:
\[
\rho_d=\sqrt{\frac{d-1}{d+1}},\qquad
\widehat X_{d,e}=\rho_d^eX_{d,e},\qquad
\widehat Y_{d,e}=\rho_e^dY_{d,e}.
\]
Every balanced pair has $d,e\ge2$. Substituting these definitions into
\eqref{eq:mix} gives
\begin{align}
b_{d,e}&\le
\left(\frac{d+1}{d-1}\right)^{de/(2r)}
\left(\frac{e+1}{e-1}\right)^{de/(2r)}
\widehat X_{d,e}^{\vartheta}\widehat Y_{d,e}^{1-\vartheta}\notag\\
&\le3\,\widehat X_{d,e}^{\vartheta}\widehat Y_{d,e}^{1-\vartheta}.
\label{eq:damped}
\end{align}
For the last inequality, the power-series expansion
\[
k\log\frac{k+1}{k-1}
=2\sum_{j=0}^{\infty}\frac{1}{(2j+1)k^{2j}}
\qquad(k>1)
\]
shows that this expression decreases with $k$, so for $k\ge2$ it is at most
its value $2\log3$ at $k=2$. Taking the logarithm of the factor in
\eqref{eq:damped}, we obtain
\[
\begin{aligned}
\frac{de}{2r}\left(\log\frac{d+1}{d-1}+\log\frac{e+1}{e-1}\right)
&\le\frac{de}{2r}\left(\frac{2\log3}{d}+\frac{2\log3}{e}\right)\\
&=\log3.
\end{aligned}
\]

We now match the weight $(d+e)^{-10}$ in \eqref{eq:blocks} to the weights
$d^{-10}$ and $e^{-10}$, gaining a contraction factor.
Divide \eqref{eq:damped} by $r^5$:
\[
\frac{b_{d,e}}{r^5}\le
3\frac{d^{5\vartheta}e^{5(1-\vartheta)}}{r^5}
\left(\frac{\widehat X_{d,e}}{d^5}\right)^\vartheta
\left(\frac{\widehat Y_{d,e}}{e^5}\right)^{1-\vartheta}.
\]
The extra factor (recall that $\vartheta=d/r$) is
\[
\frac{d^{5\vartheta}e^{5(1-\vartheta)}}{r^5}
=\left[\vartheta^\vartheta(1-\vartheta)^{1-\vartheta}\right]^5.
\]
For a balanced pair, $\vartheta\in[1/6,5/6]$, and
\[
\vartheta^\vartheta(1-\vartheta)^{1-\vartheta}
\le\left(\frac{5^5}{6^6}\right)^{1/6}<\frac23.
\]
Indeed, the maximum occurs at the endpoints $1/6$ and $5/6$: the logarithm
$\vartheta\log\vartheta+(1-\vartheta)\log(1-\vartheta)$ is convex and symmetric.

Set
\[
U_{d,e}=\frac{\widehat X_{d,e}}{d^5},\qquad
V_{d,e}=\frac{\widehat Y_{d,e}}{e^5}.
\]
Then
\[
\frac{b_{d,e}}{r^5}\le3\left(\frac23\right)^5
U_{d,e}^\vartheta V_{d,e}^{1-\vartheta}.
\]
Square and use weighted AM--GM, followed by $\vartheta,1-\vartheta\le1$:
\begin{align}
\frac{b_{d,e}^2}{r^{10}}
&\le9\left(\frac23\right)^{10}
U_{d,e}^{2\vartheta}V_{d,e}^{2(1-\vartheta)}\notag\\
&\le9\left(\frac23\right)^{10}
\left(\vartheta U_{d,e}^2+(1-\vartheta)V_{d,e}^2\right)\notag\\
&\le9\left(\frac23\right)^{10}
\left(U_{d,e}^2+V_{d,e}^2\right).
\label{eq:young}
\end{align}
In view of \eqref{eq:blocks} and \eqref{eq:young}, it remains to bound the
sums of $U_{d,e}^2$ and $V_{d,e}^2$ over balanced pairs, uniformly in the
partition. Since all terms are nonnegative, we may bound the larger sums
over all $d,e\ge2$.

For the first sum, the definition $U_{d,e}=d^{-5}\widehat X_{d,e}$ gives
\[
\sum_{d,e\ge2}U_{d,e}^2
=\sum_{d\ge2}\frac1{d^{10}}\sum_{e\ge2}\widehat X_{d,e}^2.
\]
We first fix $d\ge2$ and estimate the inner sum over $e$; afterward we
restore the weight $d^{-10}$ and sum over $d$. Recall that
\[
\widehat X_{d,e}=\rho_d^eX_{d,e}
=\left[\sum_{|A|=d}
\left(\sum_{|B|=e}\rho_d^{2e}|c_{A,B}|^2\right)^{q_d/2}\right]^{1/q_d},
\]
where $\rho_d=\sqrt{(d-1)/(d+1)}$.
For each $A\subset I$ with $|A|=d$, collect the entire row
\[
R_A(y)=\sum_{B\subset J}c_{A,B}y^B.
\]
We keep all $y$-degrees together here. The noise operator is
$T_\rho(\sum_B u_By^B)=\sum_B\rho^{|B|}u_By^B$.
Minkowski, Parseval, and the Bonami--Beckner hypercontractive
inequality \cite{Bonami70,Beckner75} give the following chain:
\begin{align}
\left(\sum_{e\ge2}\widehat X_{d,e}^2\right)^{1/2}
&\le\left[\sum_{|A|=d}
\left(\sum_{B\subset J}\rho_d^{2|B|}|c_{A,B}|^2\right)^{q_d/2}
\right]^{1/q_d}\notag\\
&=\left(\sum_{|A|=d}\|T_{\rho_d}R_A\|_2^{q_d}\right)^{1/q_d}\notag\\
&\le\left(\sum_{|A|=d}\|R_A\|_{q_d}^{q_d}\right)^{1/q_d}\notag\\
&=\left(\E_y\sum_{|A|=d}|R_A(y)|^{q_d}\right)^{1/q_d}.
\label{eq:rows}
\end{align}
The first step uses the direction
$\ell_e^2(\ell_A^{q_d})\le\ell_A^{q_d}(\ell_e^2)$, valid since $q_d\le2$,
and then includes the nonnegative terms of $y$-degrees $0$ and $1$.
The hypercontractive step uses exactly $\rho_d^2=q_d-1$.
It applies to the full, possibly inhomogeneous polynomial $R_A$.

For fixed $y$, the numbers $R_A(y)$ are the degree-$d$ coefficients of
$x\mapsto f(x,y)$. Hence the last expression in \eqref{eq:rows} equals
\[
\left(\E_y \mathcal{S}_{d}(f(\,\cdot\,,y))^{q_d}\right)^{1/q_d}
\le\left(\E_y \mathcal{S}_{d}(f(\,\cdot\,,y))^2\right)^{1/2}.
\]
Square, divide by $d^{10}$, and sum in $d$:
\begin{align}
\sum_{d,e\ge2}U_{d,e}^2
&\le\E_y\sum_{d\ge2}
\frac{\mathcal{S}_{d}(f(\,\cdot\,,y))^2}{d^{10}}\notag\\
&\le M^2.
\label{eq:rowbound}
\end{align}
The last step holds because $f(\,\cdot\,,y)$ has degree at most $m$ and
supremum norm at most $1$. Similarly, interchanging $x$ and $y$ gives
\begin{equation}
\sum_{d,e\ge2}V_{d,e}^2\le M^2.
\label{eq:colbound}
\end{equation}

This explains why we introduced $M$: the coefficient sums produced
by hypercontractivity are precisely the weighted coefficient sums of
coordinate restrictions.

Combining \eqref{eq:young}, \eqref{eq:rowbound}, and \eqref{eq:colbound}, for
every partition,
\[
\sum_{(d,e)\in\bal}\frac{b_{d,e}^2}{(d+e)^{10}}
\le18\left(\frac23\right)^{10}M^2.
\]
Substitute this into \eqref{eq:blocks}:
\begin{equation}
T(f)\le
\underbrace{p^{-13/24}\kappa\,3\sqrt2\left(\frac23\right)^5}_{c}M \leq \frac{3}{4}M
\label{eq:tail}
\end{equation}

\begin{remark}
The balance condition $r/6 \leq d,e \leq 5r/6$ is what makes $c<1$: it bounds the weight factor
$[\vartheta^\vartheta(1-\vartheta)^{1-\vartheta}]^5$ by $(2/3)^5$,
enough to absorb the other losses. 
\end{remark}

Returning to our original split \eqref{eq:headtail}  we obtain
\[
M\le Km^{22}+\tfrac34M,
\]
and hence $M\le4Km^{22}$. By scaling (restoring homogeneity in $f$), we obtain
\begin{equation}
S(f)\le4Km^{22}\|f\|_\infty.
\label{eq:weighted}
\end{equation}
Everything was proved at fixed $n$, and the resulting constant is independent
of $n$. There is no need to assume in advance that the supremum over all
dimensions is finite. For $m<12$, the same low-degree estimate, with the sum
stopped at $r=m$, already gives the required bound.

Finally, return to the coefficient norm we started with. By \eqref{eq:start},
\begin{align*}
\left(\sum_{|S|\le m}|\widehat f(S)|^{q_m}\right)^{1/q_m}
&\le\left(1+4Km^{27+1/(2m)}\right)\|f\|_\infty\\
&\le K'm^{27}\|f\|_\infty,
\end{align*}
because $m^{1/(2m)}\le2$. Thus the argument gives
\[
\left(\sum_{|S|\le m}|\widehat f(S)|^{2m/(m+1)}\right)^{(m+1)/(2m)}
\le K'm^{27}\|f\|_\infty.
\]
This proves both theorems.
\end{proof}

\begin{remark}
We separate the first eleven levels so that the random-partition argument
applies uniformly to the remaining degrees. With our balance interval,
$r\ge12$ ensures $d,e\ge r/6\ge2$; hence $q_d,q_e>1$ and both damping
factors are positive, so they can be removed at a uniformly bounded cost.
The same cutoff gives the common probability bound $p=1-2e^{-8/3}$ used
in the contraction estimate.

Estimating the first eleven levels by Markov's inequality costs $m^{22}$,
and removing the weight $r^{-5}$ costs $m^5$. These two contributions give
the exponent $27$. For simplicity of the argument, we did not try to
optimize the exponent.
\end{remark}

\section{Application to the Aaronson--Ambainis conjecture}\label{sec:AA}

The Aaronson--Ambainis conjecture \cite[Conjecture~1.7]{AA14} asserts that
every $f:Q_n\to[-1,1]$ of degree at most $m$ has a coordinate $i$ with
$\operatorname{Inf}_i(f)\ge c_0(\operatorname{Var}(f)/m)^{C_0}$ for absolute
constants $c_0,C_0>0$. For coefficients of one common magnitude, the
implication from polynomial Bohnenblust--Hille bounds was already noted
in \cite{DMP19}; here the magnitude may vary with the degree.

\begin{proof}[Proof of Corollary~\ref{cor:AA}]
Put $v_r=\sum_{|S|=r}|\widehat f(S)|^2=\binom nr a_r^2$. The hypothesis
gives, for every $i\in[n]$,
\[
\operatorname{Var}(f)=\sum_{r=1}^m v_r,
\qquad
\operatorname{Inf}_i(f)=\sum_{r=1}^m\binom{n-1}{r-1}a_r^2
=\frac1n\sum_{r=1}^m rv_r.
\]
Moreover, since $2/q_r=1+1/r$ and $\binom nr\ge(n/r)^r$,
\[
\mathcal{S}_r(f)^2=\binom nr^{1/r}v_r\ge\frac nr\,v_r.
\]
Cauchy--Schwarz and Theorem~\ref{thm:weighted} therefore yield
\begin{align*}
\operatorname{Var}(f)^2
&\le\left(\sum_{r=1}^m rv_r\right)
       \left(\sum_{r=1}^m\frac{v_r}{r}\right)\\
&\le\operatorname{Inf}_i(f)\sum_{r=1}^m\mathcal{S}_r(f)^2\\
&\le m^{10}\operatorname{Inf}_i(f)
       \sum_{r=1}^m\frac{\mathcal{S}_r(f)^2}{r^{10}}\\
&\le K^2m^{54}\operatorname{Inf}_i(f),
\end{align*}
where the last step uses $\|f\|_\infty\le1$.
Taking $c=K^{-2}$ proves the corollary.
\end{proof}

\section*{Acknowledgments}
P.~Ivanisvili was supported in part by NSF grants DMS-2152401 (CAREER)
and DMS-2554183, a Simons Fellowship, and a Humboldt Research Fellowship
for Experienced Researchers.

The authors acknowledge the use of AI tools, in particular Grok Bots.
Following the release of Pellegrino and Teixeira's paper \cite{PT26} on
polynomial growth of the complex polynomial Bohnenblust--Hille constants,
we prompted Grok Bots to use that work as a starting point for proving
polynomial bounds on the Hamming cube. Grok Bots generated a proof,
which we checked and verified to be correct. Grok's raw output was
publicly circulated the author in an X post on August~27, 2026;
see the original draft \cite{Grok26}. The present paper develops
that proof into a detailed exposition, with additional explanations to make the argument easier to follow.

\end{document}